\begin{document}
\newcommand{\R}{{\mathbb R}}
\newcommand{\N}{{\mathbb N}}
\newcommand{\C}{{\mathbb C}}
\newcommand{\Z}{{\mathbb Z}}
\newcommand{\h}{{\delta}}
\newtheorem{lemma}{Lemma}
\newtheorem{theorem}{Theorem}
\newtheorem*{corollary}{Corollary}
\newtheorem*{fact}{Theorem} 
\newtheorem*{lfact}{Lemma}
\newtheorem{conjecture}{Conjecture}
\newtheorem{proposition}{Proposition}
\theoremstyle{remark}
\newtheorem{example}{Example}
\newtheorem{problem}{Problem}
\newtheorem*{remark}{Remark}
\newtheorem*{definition}{Definition}
\oddsidemargin 16.5mm
\evensidemargin 16.5mm
\thispagestyle{plain}

\begin{center}
{\large \sc  Applicable Analysis and Discrete Mathematics}

{\small available online at  http:/$\!$/pefmath.etf.rs }
\end{center}

\noindent{\small{\sc  Appl. Anal. Discrete Math.\ }{\bf 7} (2013),
143--160.} \hfill{\scriptsize doi:10.2298/AADM121204025G}

\vspace{5cc}
\begin{center}

{\large\bf STABILITY ESTIMATES FOR DISCRETE HARMONIC FUNCTIONS ON PRODUCT DOMAINS
\rule{0mm}{6mm}\renewcommand{\thefootnote}{}%Enter at least one, but not more than 3 MSCs.
% First entered MSC will be a primary one, others (at most 2) will be secondary.
\footnotetext{\scriptsize 2010 Mathematics Subject Classification. 31C20, 31B05, 39A12.

\rule{2.4mm}{0mm}Keywords and Phrases. discrete harmonic functions, Phragm\'{e}n-Lindel\"{o}f principle, eigenvalues of Dirichlet-Laplacian, three-line theorem, stability estimates.
}}

\vspace{1cc}
{\large\it Maru Guadie}

\vspace{1cc}
\parbox{24cc}{{\small

%\begin{abstract}
We study the Dirichlet problem for discrete harmonic functions in unbounded product domains on multidimensional lattices. First we prove some versions of the  Phragm\'{e}n-Lindel\"{o}f theorem and use Fourier series to obtain a discrete analog of the three-line theorem for the gradients of harmonic functions in a strip. Then we derive estimates for the discrete harmonic measure and  use elementary spectral inequalities to obtain stability estimates for Dirichlet problem in cylinder domains.}}
%\end{abstract}
\end{center}
\vspace{1cc}

\vspace{1.5cc}
\begin{center}
{\bf 1. INTRODUCTION}
\end{center}

We consider functions defined on subsets of  the multidimensional lattice $(\delta\Z)^{m}$  in $\R^m$.  The usual 
 $2m+1$-point discretization of the Laplace operator is denoted by $\Delta_m$ or $\Delta_{\delta,m}$ to emphasize the mesh of the lattice, the accurate definition is given below. Then we study the following Dirichlet problem 
\begin{eqnarray*}
&\Delta_m u=0,\\
&u=f\ {\text{on}}\ \partial D,\\
&u\in H_b(D),
\end{eqnarray*}
where $H_b(D)$ is some class of functions of bounded growth in $D$,  and  $D$ is an unbounded connected (on the lattice) subset of $(\delta\Z)^m$. Our main question is for which $H_b(D)$ the problem above has a unique solution. Moreover, when the solution is unique we estimate how the error in the boundary data affects the error of the solution. Such estimates are called conditional stability estimates, we suppose a priori that solution belongs to $H_b(D)$.  Since our problem is linear, stability estimate reduces to a bound of some norm of the solution $u\in H_b(D)$ by some norm of its boundary values $f$. 

First, we prove that if $D=\Omega\times \R^k,$ where $\Omega$ is a bounded domain in $\R^n,$ $u(x,y)$ is a discrete harmonic function in $D\cap \left(\delta\Z\right)^{n+k}$ that satisfies 
\[|u(x,y)|\le C\exp\left(c\|y\|_1\right)\]
 for some $c=c(\Omega,k),$   and $u=0$ on $\partial D$ then  $u=0$ (here and in what follows $\|y\|_1=|y_1|+...+|y_k|,$ and  $\left\|y\right\|_{\infty}=\max\left\{\left|y_{1}\right|,\ldots,\ \left|y_{k}\right|\right\}$ where $y=(y_1,...,y_k)\in \R^k$). We refer to this statement as a discrete version of the Phragm\'{e}n-Lindel\"{o}f theorem, it implies the uniqueness in the Dirichlet problem in the class of functions of limited growth.
 
  We consider more carefully the case $\Omega=[0,1]$ and solve the Dirichlet problem using Fourier analysis when the  boundary data is in $l^2$. We obtain
\[\|u(x,.)\|_{l^2}\le \|f\|_{l^2}.\] 
We  also use this technique to show that gradients of discrete harmonic functions satisfy the following  three-line inequality that resembles three-line theorem of Hadamard,
\begin{equation}\label{eq:last1}
\|\nabla u(\delta k,\cdot)\|_{l^2(\Z^k)}\le (\|\nabla u(0,\cdot)\|)^{1-\frac{k}{M}}(\|\nabla u(\delta M,\cdot)\|)^{\frac{k}{M}},
\end{equation}
where $(M+1)\delta=1$. Both the Phragm\'{e}n-Lindel\"{o}f theorem and Hadamard's three line theorem are classical results in complex analysis (for example see \cite{SS}). We discuss discrete version of their multidimensional generalizations, corresponding continuous results are known and we provide the references throughout the text.

Finally, to obtain conditional stability estimates for Dirichlet problem with partial boundary data (see Theorem \ref{th:last}), we study the discrete harmonic measure in the truncated cylinder  $\Omega\times[-N,N]$. We also use elementary properties of the spectrum of the discrete Dirichlet problem for the Laplacian on $ \Omega$ and some comparison results that can be found in T. Biyikoglu, J. Leydold, P. F. Stadler  \cite{BLS07} and D.~Cvetkovi\'{c}, P. Rowlinson, S. Simi\'{c}   \cite{CRS10}.

The article is organized as follows. In the next section we give necessary definitions and results for discrete harmonic functions, including basic properties of the eigenvalues and eigenfunctions of the discrete Laplace operator with Dirichlet boundary condition. We also prove a simple version of the Phragm\'{e}n-Lindel\"{o}f theorem for product domains. In Section 3 we use Fourier analysis to study discrete harmonic functions in a strip, in particular we obtain the logarithmic convexity inequality (\ref{eq:last1}). Our main stability result for the Dirichlet problem in an infinite cylinder is proved in the last section, it follows from estimates of discrete harmonic measure and a more accurate version of the Phragm\'{e}n-Lindel\"{o}f theorem. 

\vspace{1.5cc}
\begin{center}
{\bf 2. PRELIMINARIES}
\end{center}

\noindent{\bf 2.1 Discrete harmonic functions}

The theory of discrete harmonic functions on the lattices dates back to at least as early as 1920s, when fundamental works of H. B. Phillips and N. Wiener  \cite{PW23} ,  and R. Courant, K. Friedrichs, and H. Lewy  \cite{CFL28}  were published. In the middle of the last century an important contribution to the theory of discrete harmonic functions was done by H. A. Heilbronn \cite{H49}  and R. J. Duffin \cite{D53}. One of the original motivations for the study of discrete harmonic functions is that such functions converge to continuous ones. For example to obtain a solution of the Dirichlet problem one may solve discrete problems in lattice domains and pass to the limit as the mesh size of the lattice goes to zero, we refer the reader to the classical works mentioned above and to the article of I.~G.~Petrowsky \cite{P41}.   

Suppose that $u(x)$\ is a function defined on a subset of the lattice  $(\h\Z)^{m}$. Then the $\h$-discrete Laplacian of $u$ is defined by
\begin{equation*}
\Delta_{\h} u(x)=\Delta_{\h,m} u(x)=\h^{-2}\left(\sum^{m}_{j=1}\left(u(x+\h e_{j})+u(x-\h e_{j})\right)-2mu(x)\right),
\end{equation*}
where $e_1, e_2,..., e_m$ is the standard coordinate basis for $\Z^m$ and    $-\Delta_{\delta}$ coincides with the combinatorial Laplacian of the lattice where the conductance associated to each edge equals $\delta^{-2}.$  This is the discrete version of the Laplace-Beltrami operator in Riemannian manifolds. We refer the reader to T. Biyikoglu, J. Leydold, P. F. Stadler  \cite{BLS07} for the details. Potential theory on finite networks is an active area of investigation, see for example\cite {BCE2005}  and references therein.
\begin{definition}
A function $u$ is called $\h$-discrete harmonic at a point $x$ of the lattice   $(\h\Z)^{m}$ if it is define at $x$ together with all its neighbors and satisfies the  equation 
\[
\Delta_{\h} u(x)=0.
\]
So the value of a discrete harmonic function at a lattice point is the average of its values at the $2m$ neighboring points.
\end{definition}

Discrete harmonic functions share many properties of continuous ones. For example results on the maximum principle, solution to the Dirichlet problem, Green's function, and Liouville's theorem can be found is the very first articles on the subject, see also Y. Colin de Verdi\'{e}re  \cite{CV93} and C. Kiselman \cite{Kis05} for more recent surveys and more general discrete structures. On the other hand not all results about continuous harmonic functions are easily generalized to the discrete case. For example zero sets of discrete harmonic functions are difficult to compare to those of continuous ones. For any finite square there exists a discrete harmonic polynomial that vanishes at each lattice point of this square. We study growth properties of discrete harmonic functions in cylinders and strips and provide accurate estimates that show to which extend continuous theorems can be generalized to solutions of the discrete equation that arises in the simplest numerical scheme. 

We consider discrete harmonic functions on subsets of $(\h\Z)^{m},$  $D^{\delta}\subset \left(\delta\Z\right)^{m}$ is called a (discrete) domain if it is connected, i.e., for any two points $x$ and $y$ in $D^{\delta}$ there exists a sequence $\left\{x_{0},x_{1},\ldots,x_{s}\right\}$ such that $x_{0}=x,\ x_{s}=y,\ x_{j}\in D^{\delta},\ x_{j}$ and $x_{j+1}$ are neighboring points of the lattice $(\h\Z)^m$.

A point $x\in (\h\Z)^m\setminus D^{\delta}$ is called a boundary point of $D^{\delta}$ if at least one of the $2m$ neighbors of $x$ is in $D^{\delta}.$ We denote the set of boundary points of $D^{\delta}$ by $\partial D^{\delta}$, we also use the notation $\overline{D}^{\delta}=D^{\delta}\cup\partial D^{\delta}$.  A domain is called finite if it contains only finite number of points, otherwise it is called infinite.
  \begin{definition}
A function $u$ defined on $D^{\delta}\cup\partial D^{\delta}$ is called $\h$-discrete subharmonic (superharmonic) in $D^{\delta}$ if $\Delta_{\h}u\geq 0 \ (\leq 0)$ in $D^{\delta}.$
\end{definition}
Clearly, a function is $\delta$-discrete harmonic in $D^{\delta}$ if it is both $\delta$-discrete subharmonic and superharmonic.
The following maximum principle holds (see for example  \cite{Kis05}).
\begin{fact}
If $u$ is $\delta$-discrete subharmonic in a finite domain $D$ then 
\[\max_{\overline{D}}u=\max_{\partial D}u.\]
\end{fact}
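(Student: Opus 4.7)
The plan is to proceed by contradiction: assume that $M := \max_{\overline{D}} u$ is not attained on $\partial D$, so $u(x) < M$ for every $x \in \partial D$, while $u(x_0) = M$ for some $x_0 \in D$. The goal is to derive from this a boundary point at which the value $M$ is nevertheless attained.

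First I would exploit the subharmonicity at the single point $x_0$. Multiplying $\Delta_{\delta} u(x_0) \geq 0$ by $\delta^{2}$ and rearranging gives
\[
\sum_{j=1}^{m}\bigl(u(x_0+\delta e_j)+u(x_0-\delta e_j)\bigr)\geq 2m\,u(x_0)=2mM.
\]
By the definition of a boundary point, each of the $2m$ lattice neighbors of $x_0$ lies in $\overline{D}$, hence each summand on the left is at most $M$. The only way the inequality can hold is therefore that every neighbor of $x_0$ also attains the value $M$. This is the crucial local step: any point of $D$ at which the maximum is attained transmits the maximum to all $2m$ of its lattice neighbors.

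Next I would propagate this local information using the standing assumptions that $D$ is lattice-connected and finite. Since $D$ is finite and nonempty, $\partial D$ is nonempty, and so there exists $x^{*}\in D$ adjacent to some $y^{*}\in \partial D$. By the definition of a discrete domain, there is a lattice path $x_0=z_0,z_1,\dots ,z_k=x^{*}$ entirely contained in $D$. Iterating the local step along this path gives $u(z_i)=M$ for all $i$ by induction, and one further application of the local step at $z_k=x^{*}$ forces its neighbor $y^{*}\in\partial D$ to satisfy $u(y^{*})=M$ as well. This contradicts $u<M$ on $\partial D$, so the assumption was false, yielding $\max_{\overline{D}}u=\max_{\partial D}u$.

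The only subtle ingredient is the propagation step, which hinges on being able to reach $\partial D$ from any interior maximum along a sequence of interior lattice neighbors; this is ensured by combining finiteness of $D$ (so $\partial D\neq\varnothing$ and the process terminates) with the lattice connectivity built into the definition of a discrete domain. Everything else reduces to the elementary averaging characterization of $\Delta_{\delta}$, so no deeper machinery should be needed.
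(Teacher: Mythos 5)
Your proof is correct and is exactly the standard argument behind this maximum principle, which the paper itself does not reprove but only cites (to Kiselman): the averaging identity forces an interior maximum to propagate to all $2m$ neighbors, and lattice connectivity plus finiteness of $D$ carries it to a boundary point. Both the local step (each neighbor of a point of $D$ lies in $\overline{D}$, so each summand is at most $M$) and the propagation step are justified properly, so there is nothing to add.
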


 Simple examples show that the maximum principle does not hold for infinite domains. 
%%%%%%%%%%%%%%%%%%%%%%%%%%%%%%%%%%%%%%%%%%%%%

\noindent{\bf 2.2 Eigenvalues and eigenfunctions for the discrete Laplacian}%\label{ssec:ev}

In order to prove a version of the Phragm\'{e}n-Lindel\"{o}f theorem for discrete subharmonic functions in cylindrical domains, we need some basic facts about  eigenfunctions and eigenvalues of the discrete Dirichlet problem for the Laplacian on the base of the cylinder. For more general theory of graph spectra we refer the reader to D.~Cvetkovi\'{c}, P.~Rowlinson, S.~Simi\'{c} \cite{CRS10} and F. R. K. Chung \cite[ch 1]{RKC97}.

Throughout the paper $\Omega$ denotes  a bounded domain in $\R^{n}$ $n\ge 1,$ with Lipschitz boundary and  $\Omega^\h=\Omega\cap(\h\Z)^{n}$. We always assume that $\h<\h_0$ is small enough such that $\Omega^\h$ is a discrete connected set. We study $\h$-discrete harmonic functions that are defined on the product domain $D^\h(\Omega)=\overline{\Omega}^{\delta}\times (\h\Z)^k$ and vanish on the boundary. We consider the eigenvalues $\left\{\lambda_j(\Omega)\right\}$ of the continuous $n$-dimensional Dirichlet problem for the Laplacian on $\Omega$  and  the eigenvalues of the corresponding discrete operators. We denote the eigenvalue for the discrete Dirichlet problem on $\overline{\Omega}^{\delta}$  by  $\lambda^{\delta}_{j}\left(\Omega^{\delta}\right)$  and we use the notation $\lambda^{\delta}_{j}$  when it does not lead to confusion.  It is known (see for example  \cite{BLS07} or   \cite{CRS10}) that the eigenvalues of the following problem
\[
\left\{ \begin{array}{cc}
-\Delta_{\h,n} f=\lambda f\quad &{\text{in}}\ \Omega^\h\\
f=0\quad&{\text{on}}\  \partial \Omega^\h\end{array}\right. 
\]
are positive, $0<\lambda^\h_{1}<\lambda^\h_{2}\leq\ldots\leq\lambda^\h_{K^\h},$ the first eigenvalue is simple and the corresponding eigenfunction $f^\h_1$ can be chosen strictly positive in $\Omega^\h$. The last statement is an analog of the classical result on the first eigenfunction of Dirichlet problem for the Laplacian, see R. Courant, D. Hilbert \cite[\S 6, ch VI]{CH53}. For the discrete operator it follows from the Perron-Frobeniuos theorem on positive matrices, see for example  \cite[Corollary 2.23]{BLS07} . Clearly $K^\h$ is finite in the discrete case and equals the number of points of $\Omega^\h$. 

It is also known that $\lambda_k^\h(\Omega^\h)\rightarrow \lambda_k(\Omega)$ as $\h\rightarrow 0$. We don't discuss the limits arguments in this article, but we indicate which of our estimates survive the limit passage as $\delta\rightarrow 0$.  

The eigenvalues  $\lambda^{\h}_{k}(\Omega^{\h})$ are given by the following minimax principle, see  \cite[Corollary 2.6]{BLS07}, 
\[\lambda^{\h}_{k}(\Omega^{\h})=\min_{w\in W_{k}}\max_{0\neq g\in w}\frac{\left\langle g,L^\h_{\Omega}g\right\rangle}{\left\langle g,g\right\rangle},\]
where $W_{k}$  denotes the set of subspaces of dimension at  least $k$ and   $L^\h_{\Omega^{\delta}}$ is the $\h$-discrete Laplacian of $\Omega$ with Dirichlet boundary condition.   This readily implies that if $\Omega^{\prime}\supset \Omega$ then  
\begin{equation}\label{eq:incl}
\lambda^{\h}_{k}(\Omega^{\prime})\leq \lambda^{\h}_{k}(\Omega).
\end{equation}
We denote by $N_\Omega^\delta$ the counting function, $N_\Omega^\delta(\lambda)$ equals the number of eigenvalues $\lambda_k^\delta(\Omega)$ that are less than or equal to $\lambda$. Then (\ref{eq:incl}) implies 
\begin{equation}\label{eq:incl1}
N_{\Omega'}^\delta(\lambda)\ge N_{\Omega}^\delta(\lambda).
\end{equation}

\noindent{\bf 2.3 Eigenvalues for the cube}

We need some estimates of the growth of the eigenvalues $\lambda_j^\h(\Omega)$ to prove a precise version of the  Phragm\'{e}n-Lindel\"{o}f theorem in the last section of this article. We obtain them by comparing the eigenvalues to those of a large cube $Q$ containing $\Omega$. The latter can be found explicitly. Let $Q_R=(0,R)^n$, where $R\in\N$ and let $M=1/\h\in\N$. We consider the following problem
\[
\left\{\begin{array}{cc}
-\Delta_{\h,n} f=\lambda f\quad &{\text{in}}\ Q_R^\h\\
f=0 \ \quad &{\text{on}}\ \partial Q_R^\h\end{array}\right.
 \]
This is an eigenvalue problem for a matrix of the size $(R\h^{-1}-1)^{n}\times(R\h^{-1}-1)^{n}$.
Let $J=\{1,2,..., R\h^{-1}-1\}$, for any $\overline{k}\in J^{n}$, $\overline{k}=(k_1,...,k_{n})$  the function 
 \[f_{\overline{k}}(x_1,...,x_{n})=\prod_{j=1}^{n}\sin\frac{k_j\pi}{R}x_j\] 
is an eigenfunction and the corresponding eigenvalue is 
 \[\lambda^\h_{\overline{k}}=2\h^{-2}\left(n-\sum_{j=1}^{n}\cos\frac{k_j\pi \h}{R}\right).\] 
 Using the elementary inequality $1-\cos x\ge 2\pi^{-2}x^2$, when $x\in(0,\pi)$ we obtain
 \[\lambda^\h_{\overline{k}}\ge 4R^{-2}\sum_{j=1}^{n}k_j^2.\] For the details of eigenvalues and eigenfunctions for the cube we refer the reader to F. R. K. Chung \cite[ch 1]{RKC97}.
 
  The following  inequality for the counting function for the cube follows
 \begin{equation}\label{eq:Wa}
 N_{Q_R}^\h(\lambda)\le C_n(R)(\lambda^{n/2}+1),
\end{equation}
where the constant does not depend on $\h$. This inequality is an illustration of the Weyl's asymptotic for the counting function for eigenvalues of Dirichlet problem for the Laplacian.

\noindent{\bf 2.4 Phragm\'{e}n-Lindel\"{o}f theorems in cylindrical domains}%\label{ss:cyl}

Let $\Omega$ be a bounded subdomain of $\R^{n}$  and   $D^{\delta}=\Omega^{\delta}\times(\delta\Z)^k$. Clearly, \[\Delta_{\delta,n+k}u(x,y)=\Delta_{\delta,n}u(x,y)+\Delta_{\delta,k}u(x,y),\] where the first Laplacian is taking with respect to $x$-variables and the second with respect to $y$-variables. Let  $f^\h_1$ be the first eigenfunction of the Dirichlet problem for the Laplacian in $\Omega^\h$ defined above. As we noted, $f^\h_1$ is strictly positive on $\Omega^\h,$ and we have the following positive harmonic function in $D^\h$
\[
u^\h(x,y)=f_1^\h(x)\cosh b_{\delta}y_1\cosh b_{\delta} y_2...\cosh b_{\delta} y_k,
\]
where $b_{\delta}$ is the only positive solution of
\begin{equation}\label{eq:st1k}
\cosh \delta b_{\delta}=1+\frac{1}{2k}\delta^{2}\lambda_1^\delta.
\end{equation} 
 In the discrete setting the function $f^\h_{1}$ is strictly positive; this makes the proof of our first theorem of Phragm\'{e}n-Lindel\"{o}f type more simple than the proof of a similar result for continuous functions, see for example I. Miyamoto \cite{Mi96},  F.~T.~Brawn \cite{B71} and D. V. Widder \cite{W61}.

\begin{theorem}\label{th:th1}
Let $v$ be a $\h$-discrete subharmonic function in $D^\h$ such that $v\leq 0$ on $\partial \Omega^\h\times(\h\Z)^k.$ Let $\lambda^\h_1$ be the first eigenvalue of the $\h$-discrete Dirichlet problem for the Laplacian in $\Omega$ and $b_{\delta}$ be the positive solution to the equation (\ref{eq:st1k}).
 Suppose that
 \[v(x,y)\le o(1)\exp(b_{\delta}\|y\|_1),\quad{\text{when}}\ \|y\|_1\rightarrow \infty.\]
Then $v\leq 0$ on $D^\h.$
\end{theorem}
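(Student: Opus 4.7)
The plan is to use the explicit positive harmonic function $u^\delta$ described just before the statement as a comparison barrier, in the spirit of the classical Phragmén--Lindelöf argument, and then exhaust $D^\delta$ by finite boxes on which the discrete maximum principle applies.

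First I would verify that $u^\delta(x,y) = f_1^\delta(x)\prod_{j=1}^k \cosh(b_\delta y_j)$ is indeed $\delta$-discrete harmonic in $D^\delta$. Applying $\Delta_{\delta,n}$ in the $x$-variables produces a factor $-\lambda_1^\delta$, while applying $\Delta_{\delta,k}$ to a product of $\cosh(b_\delta y_j)$ produces a factor $2k\delta^{-2}(\cosh(\delta b_\delta)-1)$; the two contributions cancel precisely because of the defining equation (\ref{eq:st1k}) for $b_\delta$. Moreover $u^\delta$ vanishes on $\partial \Omega^\delta \times (\delta\Z)^k$ (because $f_1^\delta$ does) and is strictly positive on $D^\delta$.

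Next, fix $\varepsilon>0$ and consider the function $v_\varepsilon = v - \varepsilon u^\delta$, which is still $\delta$-discrete subharmonic in $D^\delta$. On the lateral boundary $\partial \Omega^\delta \times (\delta\Z)^k$ the hypothesis gives $v\le 0$ and $u^\delta = 0$, hence $v_\varepsilon\le 0$. For the behavior at infinity, note that $\Omega^\delta$ is finite, so $f_1^\delta$ attains a strictly positive minimum $m>0$ on $\Omega^\delta$, and
\[
u^\delta(x,y) \ge m\prod_{j=1}^k \cosh(b_\delta y_j) \ge m\,2^{-k}\exp(b_\delta\|y\|_1).
\]
Using the hypothesis $v(x,y)\le o(1)\exp(b_\delta\|y\|_1)$, I can choose $N=N(\varepsilon)$ so large that whenever $\|y\|_1\ge N$ one has $v(x,y)\le \varepsilon\, m\, 2^{-k}\exp(b_\delta\|y\|_1)\le \varepsilon u^\delta(x,y)$, i.e. $v_\varepsilon\le 0$.

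Now I would apply the finite-domain maximum principle (the fact cited after the subharmonic definition) to $v_\varepsilon$ on the finite box $D_N^\delta = \Omega^\delta\times\{y\in(\delta\Z)^k : \|y\|_\infty\le N\}$: the boundary of $D_N^\delta$ decomposes into the lateral part and the ``top/bottom'' part where $\|y\|_1 > N$, and on both $v_\varepsilon\le 0$; hence $v_\varepsilon\le 0$ throughout $D_N^\delta$. Since this holds for every sufficiently large $N$, it gives $v\le \varepsilon u^\delta$ on all of $D^\delta$, and letting $\varepsilon\to 0$ yields $v\le 0$, as claimed.

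The only real obstacle is the barrier bookkeeping at infinity: making sure that the $o(1)$ growth of $v$ can be genuinely absorbed by $\varepsilon u^\delta$ uniformly in $x$, which is where the positivity and positive lower bound of $f_1^\delta$ on the finite set $\Omega^\delta$ is essential. Once that is cleanly set up, the remaining steps are a direct application of the discrete maximum principle and two successive limit passages ($N\to\infty$, then $\varepsilon\to 0^+$).
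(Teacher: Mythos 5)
Your proposal is correct and follows essentially the same route as the paper: both use the explicit positive harmonic barrier $u^\delta(x,y)=f_1^\delta(x)\prod_j\cosh(b_\delta y_j)$, the lower bound $u^\delta\ge \bigl(\min_{\Omega^\delta}f_1^\delta\bigr)2^{-k}\exp(b_\delta\|y\|_1)$ coming from the strict positivity of $f_1^\delta$ on the finite set $\Omega^\delta$, and the finite-box maximum principle, differing only in bookkeeping (you subtract $\varepsilon u^\delta$ and send $N\to\infty$ then $\varepsilon\to 0$, while the paper compares $v$ with $\frac{2^kC_N}{\min f_1^\delta}u^\delta$ and lets $N\to\infty$ at a fixed point). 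Your explicit verification that $u^\delta$ is harmonic via equation (\ref{eq:st1k}) is a detail the paper leaves implicit, and your observation that the top/bottom boundary of the $\|y\|_\infty\le N$ box satisfies $\|y\|_1>N$ correctly bridges the two norms.
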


\begin{proof}
We want to compare $v(x,y)$ to a multiple of $u^\h(x,y)$  on $\overline{\Omega}^{\delta}\times\left[-N,N\right]^k.$
On the part of the boundary $\partial\Omega^\h\times(\h\Z)^{k}$  we have $v\leq0$ and  $u^\h=0$ because $f^{\delta}_{1}=0$ on $\partial\Omega^\h.$ On the other part of the boundary, $\|y\|_1\ge N$ and 
 \[v(x,y)\leq C_{N}\exp(b_{\delta} \|y\|_1)\leq \frac{2^kC_{N}}{\min_{\Omega^\h}f^\h_{1}} u^\h(x,y),\] where $C_{N}\rightarrow 0$ as $N\rightarrow \infty.$
 
The maximum principle for subharmonic functions implies that
 \[v(x,y)\leq \frac{2^kC_{N}}{\min_{\Omega^\h}f^\h_{1}}  u^\h(x,y),\quad{\text{where}}\ x\in\Omega^\h,\ y\in (\delta\Z)^k,\ \left\|y\right\|_\infty\le N.\]
 Now if we fix $(x,y)$ and let $N$ grow to infinity, we obtain
 $v(x,y)\leq 0.$
\end{proof}

The theorem holds for subharmonic functions with all estimates from above only. If we have a discrete harmonic function $h$ and apply the above statement to $h$ and $-h$ we obtain the uniqueness for the Dirichlet problem in $D^\h$ in the class of functions 
\[
H_b(D^\h)=\{u:D^\h\rightarrow \R: |u(x,y)|=o(\exp(b_{\delta}\|y\|_1)), \left\|y\right\|_{1}\rightarrow\infty\}.\] 
\begin{corollary}\label{c:m}
Let $u$ and $v$ be $\h$-discrete harmonic functions on $D^\h,$  $ u,v\in H_b(D^\h)$. If $u=v$ on $\partial(\Omega^\h)\times(\h\Z)^k$ then $u=v$ on $D^\h.$
\end{corollary}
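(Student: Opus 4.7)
The plan is to deduce the corollary directly from Theorem \ref{th:th1} applied twice, as already suggested in the paragraph preceding the statement. Set $h = u - v$. Since the discrete Laplacian is linear, $h$ is $\h$-discrete harmonic on $D^\h$, hence both $h$ and $-h$ are $\h$-discrete subharmonic there. The hypothesis $u = v$ on $\partial\Omega^\h \times (\h\Z)^k$ gives $h \equiv 0$ on that portion of the boundary, so in particular $h \le 0$ and $-h \le 0$ there, which matches the boundary assumption of Theorem \ref{th:th1}.

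Next I would verify that $h$ satisfies the growth condition required by Theorem \ref{th:th1}. Since $u, v \in H_b(D^\h)$, the triangle inequality yields
\[
|h(x,y)| \le |u(x,y)| + |v(x,y)| = o\bigl(\exp(b_\delta \|y\|_1)\bigr) \quad \text{as } \|y\|_1 \to \infty,
\]
so both $h(x,y) \le o(1)\exp(b_\delta\|y\|_1)$ and $-h(x,y) \le o(1)\exp(b_\delta\|y\|_1)$ along $\|y\|_1 \to \infty$. This is exactly the hypothesis needed to invoke Theorem \ref{th:th1}.

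Applying Theorem \ref{th:th1} to $h$ gives $h \le 0$ on $D^\h$, and applying it to $-h$ gives $-h \le 0$, i.e.\ $h \ge 0$ on $D^\h$. Combining these yields $h \equiv 0$, which is the claim $u = v$ on $D^\h$.

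There is really no substantive obstacle: the corollary is a textbook linearity-plus-symmetry argument once the Phragmén-Lindelöf theorem has been established, and the class $H_b(D^\h)$ was defined precisely so that the growth hypothesis transfers from $u$ and $v$ to their difference. The only point worth pausing on is making sure the $o(\cdot)$ growth behaves correctly under addition, which is immediate.
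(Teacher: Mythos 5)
Your argument is correct and coincides with the paper's own proof: both set $h=u-v$, note that $h$ vanishes on $\partial\Omega^\h\times(\h\Z)^k$, use the triangle inequality to transfer the $o(\exp(b_\delta\|y\|_1))$ growth bound to $h$, and apply Theorem \ref{th:th1} to $h$ and $-h$. No issues.
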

\begin{proof}
Let $g=u-v.$ Then $g$ is $\h$-discrete harmonic in  $D^\h$ and $g=0$ on $\partial(\Omega^\h)\times(\h\Z)^k$.  Moreover $\left|g(x,y)\right|\leq \left|u(x,y)\right|+\left|v(x,y)\right|$ and therefore 
\[
\left|g(x,y)\right|\leq C_N\exp(b_{\delta}\|y\|_1),\quad 
{\text{when}}\  \|y\|_1\ge N,\] where $C_{N}\rightarrow 0$ as $N \rightarrow \infty.$
Then $g\le 0$ on $D^\h$ by Theorem \ref{th:th1}. In the same way we obtain $-g\leq 0$ and thus $u=v$.
\end{proof}
We note that $b_{\delta}\rightarrow\sqrt{\lambda_1(\Omega)/k}$ when $\h\rightarrow 0$, however Theorem \ref{th:th1} does not survive a limit argument as $\h\rightarrow 0$. In the last section we provide an estimate for $\h$-discrete harmonic functions in truncated cylinders that allows us to prove a more accurate version of the Phragm\'{e}n-Lindel\"{o}f theorem. 

%%%%%%%%%%%%%%%%%%%%%%%%%%%%%%%%%%%%%%%%%%%%%%%%%%%%%%%%%%%%%%%%%%%%%%%%%%%%%%%%%
%%%%%%%%%%%%%%%%%%%%%%%%%%%%%%%%%%%%%%%%%%%%%%%%%%%%%%%%%%%%%%%%%%%%%%%%%%%%%%%%%%

\vspace{1.5cc}
\begin{center}
{\bf 3. DISCRETE HARMONIC FUNCTIONS ON STRIPS}%\label{s:strip}
\end{center}

In this section we study quantitative uniqueness for discrete harmonic functions and their gradients on strips $S=(0,1)\times\R^n$. We remark that eigenvalues of the discrete Dirichlet problem for the Laplacian on $[0,1]^\delta$ are 
\[\lambda_l^\delta=2\delta^{-2}(1-\cos 2\pi l\delta).\]
 In particular the   Phragm\'{e}n-Lindel\"{o}f theorem proved in the last section implies the uniqueness in the Dirichlet problem for discrete harmonic functions that satisfy
\begin{equation}\label{eq:st1a}
|u(x,y)|=o\left(\exp(b_{\delta}\|y\|_1)\right),\quad \left\|y\right\|_{1}\rightarrow\infty,
\end{equation}
 where 
\[
\cosh \delta b_{\delta}=  \frac{n+1}{n}-\frac1{n}\cos 2\pi\delta.
\]

\noindent{\bf 3.1 Tempered harmonic functions in a strip}

Now we consider \textit{tempered} harmonic functions in the strip and use the Fourier representation to solve the Dirichlet problem.  
\begin{definition}
Let $u$ be a $\delta-$discrete  function on $S^{\delta},$ $u$ is said to be tempered if
\[\sum^{1/\delta}_{k=0}\sum_{j\in \Z^n}\left|u(\delta k,\delta j)\right|^{2}<\infty.\]
\end{definition}
\begin{theorem}
Let $u$ be a  $\delta$-discrete harmonic function in $S^\delta$ such that (\ref{eq:st1a}) holds.   Suppose that 
\[\sum_{j\in \mathbb{Z}^n}\left|u\left(0,\delta j\right)\right|^{2}<\infty\ \mbox{ and}\  \sum_{j\in \mathbb{Z}^n}\left|u\left(1,\delta j\right)\right|^{2}<\infty.\] 
Then  $\left\{u\left(\delta k,\delta j\right)\right\}_{j\in \mathbb{Z}^n}\in l^{2}\left(\mathbb{Z}^n\right)$ for each $k=1,\ 2,\ \ldots,\ L-1,$ $i.e,$ $u$ is tempered, moreover
\[
\sum_{j\in \Z^n}|u(\delta k,\delta j)|^2\le\sum_{j\in\Z^n}|u(0,\delta j)|^2+\sum_{j\in\Z^n}|u(1,\delta j)|^2.\] 
\end{theorem}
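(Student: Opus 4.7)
The plan is to diagonalize the discrete Laplacian in the $y$-variables via the Fourier transform on $\Z^n$, construct an explicit candidate solution from the boundary data, and then use the Phragm\'{e}n-Lindel\"{o}f uniqueness (Corollary \ref{c:m}) to identify $u$ with this candidate. Since only the two boundary slices are a priori in $l^2(\Z^n)$, I cannot take Fourier transforms of the interior slices at the start; rather, I will build a function $v$ slice by slice and then show $v=u$.

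First I would introduce, for boundary data $a\in l^2(\Z^n)$, the discrete Fourier transform $\hat a(\theta)=\sum_{j\in\Z^n}a_j e^{-i\delta j\cdot\theta}$ on the torus $T^n=(-\pi/\delta,\pi/\delta]^n$, for which Parseval gives an isometry up to normalization. Splitting $\Delta_{\delta,1+n}=\Delta_{\delta,1,x}+\Delta_{\delta,n,y}$ and observing that the symbol of $\Delta_{\delta,n,y}$ is $-\mu(\theta):=-2\delta^{-2}\sum_{l=1}^n(1-\cos\delta\theta_l)\le 0$, the equation $\Delta_\delta u=0$ formally becomes the second-order recurrence in $k$,
\[
\hat u_{k+1}(\theta)+\hat u_{k-1}(\theta)=\bigl(2+\delta^2\mu(\theta)\bigr)\hat u_k(\theta).
\]
Writing $2+\delta^2\mu(\theta)=2\cosh(\delta\beta(\theta))$ with $\beta(\theta)\ge 0$, the solution of this recurrence with prescribed values at $k=0$ and $k=L$ (where $L\delta=1$) is
\[
\hat v_k(\theta)=\hat u_0(\theta)\,\frac{\sinh((L-k)\delta\beta(\theta))}{\sinh(L\delta\beta(\theta))}+\hat u_L(\theta)\,\frac{\sinh(k\delta\beta(\theta))}{\sinh(L\delta\beta(\theta))},
\]
interpreted as $(L-k)/L$ and $k/L$ at $\theta=0$ where $\beta=0$. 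Define $v(\delta k,\delta j)$ as the inverse Fourier transform of $\hat v_k$; this slice-by-slice definition makes sense because both coefficients lie in $[0,1]$, hence $\hat v_k\in L^2(T^n)$ whenever $\hat u_0,\hat u_L\in L^2(T^n)$.

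Next I would verify that $v$ is a discrete harmonic function on $S^\delta$ (which follows from the construction, provided one checks that the two coefficient functions individually solve the same recurrence), that $v=u$ on the boundary slices, and that $v$ meets the growth condition (\ref{eq:st1a}). The last point is easy: since $v(\delta k,\cdot)\in l^2(\Z^n)$ for every $k$, one has $v(\delta k,\delta j)\to 0$ as $\|j\|_1\to\infty$, so in particular $|v|=o(\exp(b_\delta\|y\|_1))$. Corollary \ref{c:m} then forces $u\equiv v$ on $S^\delta$.

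Finally, the quantitative estimate is obtained from the Fourier representation. The key elementary identity
\[
\frac{\sinh((L-k)\delta\beta)}{\sinh(L\delta\beta)}+\frac{\sinh(k\delta\beta)}{\sinh(L\delta\beta)}=\frac{\cosh((L-2k)\delta\beta/2)}{\cosh(L\delta\beta/2)}\le 1
\]
shows that the two coefficients are nonnegative and sum to at most $1$ for each $\theta$ and each $0\le k\le L$. Hence by convexity (or Cauchy-Schwarz) $|\hat u_k(\theta)|^2\le|\hat u_0(\theta)|^2+|\hat u_L(\theta)|^2$ pointwise in $\theta$, and integrating over $T^n$ and applying Parseval gives precisely
\[
\sum_{j\in\Z^n}|u(\delta k,\delta j)|^2\le \sum_{j\in\Z^n}|u(0,\delta j)|^2+\sum_{j\in\Z^n}|u(1,\delta j)|^2.
\]
The main technical worry is the soft step of identifying $u$ with $v$: it rests on confirming that $v$ does satisfy the hypotheses of the uniqueness corollary, but as noted this is immediate once slice-wise $l^2$ membership is established. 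The algebraic work on the recurrence and the coefficient bound is routine.
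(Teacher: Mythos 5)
Your proposal is correct and follows essentially the same route as the paper: Fourier-transform in the $y$-variables, solve the resulting two-point boundary value problem for the recurrence explicitly (your $\sinh((L-k)\delta\beta)/\sinh(L\delta\beta)$ coefficients are exactly the paper's $(q^{L-k}-q^{-(L-k)})/(q^{L}-q^{-L})$ with $q=e^{\delta\beta}$), and identify the candidate with $u$ via Corollary \ref{c:m}. In fact your final step is slightly sharper than the paper's written argument: by observing that the two nonnegative coefficients sum to at most $1$ and invoking convexity you obtain the stated bound $\|\varphi_k\|^2\le\|\varphi_0\|^2+\|\varphi_L\|^2$ directly, whereas the paper only records that each coefficient is at most $1$ and the triangle inequality, which literally yields the weaker $(\|\varphi_0\|+\|\varphi_L\|)^2$.
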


\begin{proof}
 Let 
 \[\varphi_{0}\left(t\right)=\sum_{j\in\mathbb{Z}^n} u\left(0,\delta j\right) e^{2\pi ij\cdot t},\quad{\text{and}}\quad \varphi_{L}\left(t\right)=\sum_{j\in\mathbb{Z}^n} u\left(1,\delta j\right) e^{2\pi ij\cdot t}\] for $t\in[0,1]^n$. Then $\varphi_{0},\varphi_{L}\in L^{2}\left([0,1]^n\right)$.
 
For each $t\in[0,1]^n$ we define $q(t)$  such that $q(t)\ge 1$ and 
 \[q(t)+q(t)^{-1}=2(n+1)-2\sum_{l=1}^n\cos2\pi t_l.\] 
 More precisely
 $q(t)=\lambda(t)+\sqrt{\lambda^{2}(t)-1}$ and then $q(t)^{-1}=\lambda(t)-\sqrt{\lambda^{2}(t)-1},$ where 
 \[\lambda(t)=n+1-\sum_{l=1}^n\cos 2\pi t_l.\] 
 
Now for $k=1,...,L-1$ we consider  \[\varphi_{k}\left(t\right)=\frac{q(t)^k-q(t)^{-k}}{q(t)^L-q(t)^{-L}}\varphi_{L}\left(t\right)+\frac{q(t)^{L-k}-q(t)^{-L+k}}{q(t)^L-q(t)^{-L}}\varphi_{0}\left(t\right).\]
Since $q\ge 1$,  we have
\[q(t)^k-q(t)^{-k}\le  q(t)^L-q(t)^{-L},\quad{\text{and}}\quad 
q(t)^{L-k}-q(t)^{-L+k}\le q(t)^{L}-q(t)^{-L}.\]
 Then $\varphi_{k} \in L^{2}\left([0,1]^n\right)$ and $\|\varphi_k\|_2\le\|\varphi_0\|_2+\|\varphi_L\|_2$.
 Thus 
 \[\varphi_{k}\left(t\right)=\sum_{j\in\mathbb{Z}^n} v\left(k,j\right) e^{2\pi ij\cdot t},\] 
 where $\left\{v\left(k,j\right)\right\}_{j\in\mathbb{Z}^n}\in l^{2}\left( \mathbb{Z}^n\right).$
 Remark that \[q\left(t\right)=\frac{1+q^{2}\left(t\right)}{2\lambda\left(t\right)}\quad  {\text{and therefore}}\quad q^{k}\left(t\right)=\frac{q^{k-1}\left(t\right)+q^{k+1}\left(t\right)}{2\lambda\left(t\right)}.\] 
  Then \[\varphi_{k}\left(t\right)=\frac{\varphi_{k-1}\left(t\right)+\varphi_{k+1}\left(t\right)}{2\lambda\left(t\right)}\] and \[\varphi_{k}=\frac{1}{2(n+1)}\left[\varphi_{k+1}+\varphi_{k-1}+\varphi_{k}\left(\sum_{l=1}^ne^{2\pi it_l}+e^{-2\pi it_l}\right)\right].\] 
Hence the Fourier coefficients $v\left(k,j\right)$ satisfy 
  \[v\left(k,j\right)=\frac{1}{2(n+1)}\left[v\left(k+1,j\right)+v\left(k-1,j\right)+\sum_{l=1}^n\left(v\left(k,j-e_l\right)+v\left(k,j+e_l\right)\right) \right].\]
It means that $v$ is a discrete harmonic function on $[1,L-1]\times\Z^n$. We have that $v\left(0,j\right)=u\left(0,\delta j\right)$ and $v\left(L,j\right)=u\left(1,\delta j\right).$
  Note also that 
  \[\left|v\left(k,J\right)\right|^{2}\leq \sum_{j\in\mathbb{Z}^n}\left|v\left(k,j\right)\right|^{2}=\left\|\varphi_{k}\right\|^2_{L^{2}\left([0,1]^n\right)}\leq\left(\left\|\varphi_{0}\right\|_{L^{2}\left([0,1]^n\right)}+\left\|\varphi_{L}\right\|_{L^{2}\left([0,1]^n\right)}\right)^2.\] 
  Thus $v\left(k,J\right)$ is bounded, in particular $\left|v\left(k,j\right)\right|=o\left(\exp({b_{\delta}\|y\|_1})\right)$ when $\| y\|_1\rightarrow\infty.$ By Corollary in  2.4, \ref{c:m} we have $v\left(k,j\right)=u\left(\delta k,\delta j\right)$ and $\left\{u\left(\delta k,\delta j\right)\right\}_{j\in\mathbb{Z}^n}\in l^{2}\left(\mathbb{Z}^n\right)$ with the required estimate.
\end{proof}

\begin{remark}
We have also proved that if $u$ is a $\delta$-discrete harmonic function on $S^\delta$ that is square-summable along the hyperplanes $\{\delta k\}\times(\delta\Z)^n$ then there exist two functions $a_1,a_2\in L^2\left([0,1]^n\right)$ such that 
\begin{equation}\label{eq:st2}
u(\delta k,\delta j)=\int_{[0,1]^n}\left(a_1(t)q(t)^k+a_2(t)q(t)^{-k}\right)e^{-2\pi j\cdot t}dt,
\end{equation}
where  $q(t)\ge1$ and is defined by
\[
q(t)+q^{-1}(t)= 2(n+1)-2\sum_{l=1}^n\cos2\pi t_l.\] 
Reviewing the computations in the proof of the lemma, we see that
\[
a_1(t)=\frac{\varphi_{L}(t)-q(t)^{-L}\varphi_{0}(t)}{q(t)^{L}-q(t)^{-L}},\quad a_2(t)=\frac{q(t)^{L}\varphi_{0}(t)-\varphi_{L}(t)}{q(t)^{L}-q(t)^{-L}}. \]

Thus the theorem provides a constructive procedure for solution of the Dirichlet problem for tempered harmonic function in a strip as well as a stability estimate for this procedure.
\end{remark}

\noindent{\bf 3.2 Three line theorem for discrete harmonic functions}

In this subsection we prove a three line theorem for the gradients of discrete harmonic functions, the corresponding continuous result and its connections to the interpolation theory can be found in S. Janson and J. Peetre \cite{JP84}.
\begin{definition}
Let $u(x,y)$ be a $\delta$-discrete function on a subdomain of the lattice $(\delta\Z)^{n+1}$, its discrete partial derivatives are defined by
\[u_{x}(x,y)=\delta^{-1}\left(u\left(x+\delta,y\right)-u\left(x,y\right)\right)\quad {\text{and}}\] \[u_{y_l}(x,y)=\delta^{-1}\left(u\left(x,y+\delta e_l\right)-u\left(x,y\right)\right).\] 
\end{definition}
 For the case of the strip $S=[0,1]\times\R^n$ all discrete partial derivatives in $y$-variables are defined on the same domain, while $u_x$ is defined on $[0,1-\delta]\times\R^n$.
 \begin{definition}
 The discrete gradient of a discrete function $u(x,y)$ on a subdomain of the lattice $(\delta\Z)^{n+1} $ is defined as
 \[
 \nabla u(x,y)=\left(u_{x}(x,y),\  u_{y_{1}}(x,y),\ u_{y_{2}}(x,y),\ldots ,u_{y_{n}}(x,y)\right)\]
 \end{definition}
\begin{theorem}\label{th:3l}
Let $u$ be a $\delta$-discrete harmonic function in $\left[0,1\right]\times\R^n$, $\delta^{-1}=M+1$ for some positive integer $M$. Suppose that $u$ satisfies (\ref{eq:st1a}) and
\[\left\{u\left(0,\delta j\right)\right\}_{j\in\mathbb{Z}^n}\in l^{2}\left(\mathbb{Z}^n\right),\quad \left\{u\left(1,\delta j\right)\right\}_{j\in\mathbb{Z}^n}\in l^{2}\left(\mathbb{Z}^n\right).\] 
Let further 
\[m\left(k\right)=\delta^2\left\|u_{x}\left(\delta k,\delta j\right)\right\|^{2}_{l^{2}\left(\mathbb{Z}^n\right)}+\delta^2\sum_{l=1}^n\left\|u_{y_l}\left(\delta k,\delta j\right)\right\|^{2}_{l^{2}\left(\mathbb{Z}^n\right)}\ {\text{for}}\ k=0,\ 1,\ \ldots,\ M.\] 
Then
\[m\left(k\right)\leq \left(m\left(0\right)\right)^{1-\frac{k}{M}}\left(m\left(M\right)\right)^{\frac{k}{M}}.\]
\end{theorem}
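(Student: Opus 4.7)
The plan is to use the Fourier representation (\ref{eq:st2}) derived in the Remark and reduce everything to the log-convexity of a one-dimensional integral. Since $u$ is tempered (by the previous theorem), the function $\phi_k(t) := a_1(t)\,q(t)^k + a_2(t)\,q(t)^{-k}$ is the Fourier series of $\{u(\delta k,\delta j)\}_j$, and by standard rules the Fourier symbols of the discrete partial derivatives are
\[
\widehat{u_x}(k,t) = \delta^{-1}(\phi_{k+1}-\phi_k) = \delta^{-1}(q-1)\bigl(a_1 q^k - a_2 q^{-k-1}\bigr),
\qquad \widehat{u_{y_l}}(k,t) = \delta^{-1}\bigl(e^{-2\pi i t_l}-1\bigr)\phi_k(t).
\]
Plancherel's theorem on each hyperplane $\{\delta k\}\times(\delta\Z)^n$, combined with the identity $\sum_l 2(1-\cos 2\pi t_l) = q+q^{-1}-2 = (q-1)^2/q$ (which follows directly from the defining relation of $q$), rewrites $m(k)$ as a single integral over $[0,1]^n$:
\[
m(k) = \int_{[0,1]^n}(q-1)^2\left[\,|a_1 q^k - a_2 q^{-k-1}|^2 + \tfrac{1}{q}|a_1 q^k + a_2 q^{-k}|^2\,\right]dt.
\]

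The key algebraic step is to observe that when these two squared moduli are expanded, the cross terms $\pm 2q^{-1}\,\mathrm{Re}(a_1\bar{a_2})$ cancel exactly, leaving the clean separated form
\[
m(k) = \int_{[0,1]^n}\bigl(A(t)\,q(t)^{2k} + B(t)\,q(t)^{-2k}\bigr)\,dt,
\]
with $A := (q-1)^2(q+1)\,q^{-1}|a_1|^2 \ge 0$ and $B := (q-1)^2(q+1)\,q^{-2}|a_2|^2 \ge 0$. The prefactor $(q-1)^2$ ensures the integrand vanishes at $t=0$, where $q=1$, so no integrability issue arises there.

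With this separated representation the three-line inequality reduces to log-convexity of $k\mapsto m(k)$ on $\{0,1,\ldots,M\}$. I would prove the two-step version $m(k)^2 \le m(k-1)\,m(k+1)$ directly by the vector-valued Cauchy--Schwarz inequality applied to $\vec{f}(t) := (\sqrt{A}\,q^{k-1},\sqrt{B}\,q^{-k+1})$ and $\vec{g}(t) := (\sqrt{A}\,q^{k+1},\sqrt{B}\,q^{-k-1})$: their pointwise inner product is exactly $A q^{2k} + B q^{-2k}$, while their squared norms integrate to $m(k-1)$ and $m(k+1)$ respectively, so $m(k) \le \sqrt{m(k-1)m(k+1)}$. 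Iterating the resulting convexity of $k\mapsto\log m(k)$ between the endpoints $0$ and $M$ yields $m(k) \le m(0)^{1-k/M}m(M)^{k/M}$. The main obstacle, and really the only step that requires care, is the algebraic bookkeeping that produces the cross-term cancellation; once the $a_1$-piece and $a_2$-piece are seen to decouple, everything else is mechanical.
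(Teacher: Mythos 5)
Your proof is correct, and up to the final step it is the same as the paper's: both arguments rest on the Fourier representation (\ref{eq:st2}), Plancherel on each hyperplane, and the cancellation of the $\mathrm{Re}(a_1\overline{a_2})$ cross terms, which in the paper appears as the identity (\ref{eq:q}), $(q-1)(q^{-1}-1)=-\sum_l|e^{-2\pi i t_l}-1|^2$ --- algebraically the same fact as your $\sum_l 2(1-\cos 2\pi t_l)=(q-1)^2/q$. Where you diverge is in how the decoupled expression is exploited. The paper keeps $2(n+1)$ separate terms of the form $\|b(t)q(t)^{\pm k}\|_{L^2}^2$, applies H\"older with exponents $M/(M-k)$ and $M/k$ to each term to get the endpoint interpolation inequality directly, and then invokes a separate lemma showing that a sum of functions each satisfying the three-line inequality satisfies it too. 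You instead collapse everything into the single integral $\int(Aq^{2k}+Bq^{-2k})\,dt$ and prove the midpoint log-convexity $m(k)^2\le m(k-1)m(k+1)$ by a vector-valued Cauchy--Schwarz, then iterate; this makes the paper's summation lemma unnecessary, since discrete midpoint log-convexity of a nonnegative sequence immediately yields the endpoint bound (with the degenerate case $m(0)=0$ or $m(M)=0$ propagating zeros harmlessly). Both routes are sound; yours trades the explicit H\"older interpolation and the auxiliary lemma for a slightly more compact convexity argument, at the cost of having to carry the explicit densities $A=(q-1)^2(q+1)q^{-1}|a_1|^2$ and $B=(q-1)^2(q+1)q^{-2}|a_2|^2$, which you have computed correctly.
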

\begin{proof}
Using  (\ref{eq:st2}) and the definition of the discrete partial derivatives, we get
\[
u_x(\delta k,\delta j)=\delta^{-1}\int_{[0,1]^n}\left(a_1(t)q(t)^k(q(t)-1)+a_2(t)q(t)^{-k}(q(t)^{-1}-1)\right)e^{-2\pi j\cdot t}dt,
\]
and 
\[
\|u_x(\delta k, \delta j)\|^2_{l^2(\Z^n)}=\delta^{-2}\|a_1(t)q(t)^{k}(q(t)-1)+a_2(t)q(t)^{-k}(q(t)^{-1}-1)\|^{2}_{L^2([0,1]^n)}.\]
Further,
\[
u_{y_l}(\delta k,\delta j)=\delta^{-1}\int_{[0,1]^n}\left(a_1(t)q(t)^k+a_2(t)q(t)^{-k}\right)e^{-2\pi j\cdot t}(e^{-2\pi i t_l}-1)dt,
\]
\[
\|u_{y_l}(\delta k, \delta j)\|^2_{l^2(\Z^n)}=\delta^{-2}\|(a_1(t)q(t)^k+a_2(t)q(t)^{-k})(e^{-2\pi it_l}-1)\|^{2}_{L^2([0,1]^n)}.\]
 Then, adding up the identities above, we get 
\begin{multline}\label{eq:m1} 
m(k)=\delta^2\left\|u_{x}\left(\delta k,\delta j\right)\right\|^{2}_{l^{2}\left(\mathbb{Z}^n\right)}+\delta^2\sum_{l=1}^n\left\|u_{y_l}\left(\delta k,\delta j\right)\right\|^{2}_{l^{2}\left(\mathbb{Z}^n\right)}=\\
\left\|a_{1}\left(t\right)q(t)^{k}\left(q(t)-1\right)+a_{2}\left(t\right)q(t)^{-k}\left(q(t)^{-1}-1\right)\right\|^{2}_{L^{2}\left([0,1]^n\right)}+\\
\sum_{l=1}^n\left\|a_{1}\left(t\right)q(t)^{k}\left(e^{-2\pi it_l}-1\right)+a_{2}\left(t\right)q(t)^{-k}\left(e^{-2\pi it_l}-1\right)\right\|^{2}_{L^{2}\left([0,1]^n\right)}.
\end{multline}
We note that $q(t)$ is real and by the definition $q(t)+q(t)^{-1}=2(n+1)-2\sum_{l=1}^n\cos2\pi t_l$, therefore
\begin{equation}\label{eq:q}
(q(t)-1)(q(t)^{-1}-1)=2\sum_{l=1}^n\cos2\pi t_l-2n=-\sum_{l=1}^n(e^{-2\pi i t_l}-1)(e^{2\pi i t_l}-1).
\end{equation}
Finally,
\begin{multline}\label{eq:m2}
\delta^2m(k)=\left\|a_{1}\left(t\right)q(t)^{k}\left(q(t)-1\right)\right\|^2_{L^2\left([0,1]^n\right)}+\left\|a_{2}\left(t\right)q(t)^{-k}\left(q(t)^{-1}-1\right)\right\|^{2}_{L^{2}\left([0,1]^n\right)}\\+
\sum_{l=1}^n\left\|a_{1}\left(t\right)q(t)^{k}\left(e^{-2\pi it_l}-1\right)\right\|^2_{L^2\left([0,1]^n\right)}+\left\|a_{2}\left(t\right)q(t)^{-k}\left(e^{-2\pi it_l}-1\right)\right\|^{2}_{L^{2}\left([0,1]^n\right)}.
\end{multline}
Each term in the right hand side of the last formula can be written in the form
$s(k)=\|b(t)q(t)^{\pm k}\|_2^2$ for some $b\in L^2([0,1]^n)$ and $q(t)^{\pm k}\in L^\infty([0,1]^n)$. 
By H\"{o}lder's inequality, we have 
\begin{multline*}
s(k)=\left\|b\left(t\right)q^{k}\left(t\right)\right\|^{2}_{L^{2}\left([0,1]^n\right)}
\leq\left(\int_{[0,1]^n}\left|b\left(t\right)\right|^{2}dt\right)^{1-\frac{k}{M}}\left(\int_{[0,1]^n}\left|b\left(t\right)\right|^2q^{2}(t)dt\right)^{\frac{k}{M}}\\
\leq \left(s(0)\right)^{1-\frac{k}{M}}\left(s(M)\right)^{\frac{k}{M}}.
\end{multline*}
 Applying the same computation for  each term and using the lemma below we conclude the proof of the theorem.
\end{proof}

\begin{lemma}
If each function $ m_{l}:\left[0,  1, \ldots,M\right]\rightarrow \mathbb{R_{+}}$ satisfies the inequality   
\[m\left(k\right)\leq\left[m\left(0\right)\right]^{1-\frac{k}{M}}\left[m\left(M\right)\right]^{\frac{k}{M}}\] 
then the sum $m(k)=\sum_l m_l(k)$ satisfies the same inequality. 
\end{lemma}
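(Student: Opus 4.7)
The plan is to reduce the claim to H\"older's inequality. Let $\alpha=1-k/M$ and $\beta=k/M$, so $\alpha,\beta\ge 0$ and $\alpha+\beta=1$. The conjugate exponents $p=M/(M-k)$ and $q=M/k$ satisfy $p\alpha=1$, $q\beta=1$, and $1/p+1/q=1$, which is exactly the pairing needed to collapse the two factors on the right.

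First, I would apply the hypothesis term by term to write
\[
m(k)=\sum_l m_l(k)\le \sum_l \bigl[m_l(0)\bigr]^{\alpha}\bigl[m_l(M)\bigr]^{\beta}.
\]
Then I would apply H\"older's inequality to the right-hand side with the exponents $p$ and $q$ chosen above, obtaining
\[
\sum_l \bigl[m_l(0)\bigr]^{\alpha}\bigl[m_l(M)\bigr]^{\beta}
\le \Bigl(\sum_l m_l(0)\Bigr)^{\alpha}\Bigl(\sum_l m_l(M)\Bigr)^{\beta}
=\bigl[m(0)\bigr]^{1-k/M}\bigl[m(M)\bigr]^{k/M}.
\]
Combining the two displays gives the stated inequality.

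The endpoints $k=0$ and $k=M$ hold trivially. The only mildly delicate issue is the convention when one of the factors vanishes, for instance if $m_l(0)=0$ or $m_l(M)=0$ for some $l$; under the usual convention $0^0=0$ in this setting, the hypothesis forces $m_l(k)=0$ for $0<k<M$, so such terms drop out of the sums and the argument above still goes through. I do not foresee a genuine obstacle here: the lemma is essentially a repackaging of H\"older's inequality, with the matching of exponents $p=M/(M-k)$ and $q=M/k$ being the only point that must be checked carefully.
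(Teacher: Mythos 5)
Your proof is correct. It is essentially the same inequality as the paper's, but packaged differently: the paper reduces to the case of two summands ($m=m_1+m_2$), factors out $m(0)^{1-\alpha}m(M)^{\alpha}$ with $\alpha=k/M$, and then invokes the normalized elementary inequality $x^{1-\alpha}y^{\alpha}+(1-x)^{1-\alpha}(1-y)^{\alpha}\le 1$ for $x,y,\alpha\in[0,1]$ --- which is exactly the two-term H\"older inequality in disguise. You instead apply H\"older directly to the whole sum with conjugate exponents $p=M/(M-k)$, $q=M/k$, which avoids the reduction to two terms (and the implicit induction on the number of summands) and works verbatim for arbitrarily many, even infinitely many, terms. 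The exponent matching $\alpha p=\beta q=1$ is checked correctly, and the endpoint cases $k=0,M$ are rightly dismissed as trivial. One tiny quibble: your appeal to the convention $0^0=0$ is unnecessary, since for $0<k<M$ both exponents $1-k/M$ and $k/M$ are strictly positive, so a vanishing factor $m_l(0)$ or $m_l(M)$ already forces $m_l(k)=0$ without any convention; and in any case H\"older holds with zero entries.
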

\begin{proof}
It suffices to prove the statement when $m(k)=m_1(k)+m_2(k)$ is the sum of two functions. Let $\alpha=k/M$ then we have 
\begin{multline*}
m(k)=m_1(k)+m_2(k)\le m_1(0)^{1-\alpha}m_1(M)^\alpha+m_2(0)^{1-\alpha}m_2(M)^{\alpha}=\\
m(0)^{1-\alpha}m(M)^\alpha
\left[\left(\frac{m_1(0)}{m(0)}\right)^{1-\alpha}\left(\frac{m_1(M)}{m(M)}\right)^\alpha+\left(\frac{m_2(0)}{m(0)}\right)^{1-\alpha}\left(\frac{m_2(M)}{m(M)}\right)^\alpha\right].\end{multline*}
And the lemma follows from the elementary inequality \[x^{1-\alpha}y^\alpha+(1-x)^{1-\alpha}(1-y)^\alpha\le 1\] when $x,y\in[0,1]$ and $\alpha\in[0,1]$.
\end{proof}

\begin{remark} The proof of Theorem \ref{th:3l} above is similar to that of the continuous three-line theorem, see  \cite{JP84}. In the continuous case the passage from (\ref{eq:m1}) to (\ref{eq:m2}) is trivial, in discrete case we fortunately have the identity (\ref{eq:q}).

For continuous harmonic functions similar three balls or three spheres theorems can be obtain, see for example J. Korevaar and  J. L. H. Meyers \cite{KM94}  and E.~Malinnikova \cite{Ma00}. There are no trivial generalizations of those results as a harmonic function can vanish on any finite square without being identically zero.
\end{remark}
%%%%%%%%%%%%%%%%%%%%%%%%%%%%%%%%%%%%%%%%%%%%%%%%%%%%%%%%%%%%%%%%%%%%%%%%%%%%%%%%%%%%%%
%%%%%%%%%%%%%%%%%%%%%%%%%%%%%%%%%%%%%%%%%%%%%%%%%%%%%%%%%%%%%%%%%%%%%%%%%%%%%%%%%%%%%%
\vspace{1.5cc}
\begin{center}
{\bf 4. HARMONIC MEASURE AND STABILITY ESTIMATES}
\end{center}

 In this section we  study $\h$-discrete harmonic functions that are defined on the cylinder $D^{\delta}(\Omega)=\Omega^{\delta}\times (\h\Z)$. Discrete harmonic measure on truncated cylinder is estimated first, then  we apply these estimates to give a more precise version of the Phragm\'{e}n-Lindel\"{o}f theorem and prove some stability results. 
 
 \noindent{\bf 4.1 Discrete harmonic measure} 

 Let now $\mathcal{H}_0(D^{\delta})$ denote the space of $\h$-discrete harmonic functions on $D^{\delta}(\Omega)$ that vanish on the boundary. Such function is uniquely determined by its values on two layers $\Omega^\h\times \{a\}$ and $\Omega^\h\times\{b\}$  (where it may attain arbitrary values) and the dimension of $\mathcal{H}_0(D^{\delta})$ equals $2K^\h$, where $K^\h$ is the number of points in $\Omega^\h$.

We note that for a function $u(x)=u(x',x_{n+1})$ on $D^{\delta}(\Omega)$ we have
\begin{multline*}
\Delta_{\h,n+1}u(x^{\prime},x_{n+1})=\\
\Delta_{\h,n}u(x^{\prime},x_{n+1})+\h^{-2}(u(x^{\prime},x_{n+1}+\h)+u(x^{\prime},x_{n+1}-\h)-2u(x^{\prime},x_{n+1})).
\end{multline*}
Let $\{f^\h_k\}_{k=1}^{K^\h}$ be a sequence of eigenfunctions of the Dirichlet problem for the Laplacian in $\Omega^\h$, discussed in 2.2. Then it is easy to check that the following functions form a basis for $\mathcal{H}_0(D^{\delta})$ 
\[u^\h_{k}(x)=f^\h_{k}(x^{\prime}) \cosh(a^\h_{k}x_{n+1}),\ \ v^\h_{k}(x)=f^\h_{k}(x^{\prime})\sinh(a^\h_{k}x_{n+1}),\ \ k= 1, 2, ...,K^\h,\]
where $a^\h_{k}$ is the only positive solution of 
\[\cosh \h a^\h_{k}=1+\frac{1}{2}\h^{2}\lambda^\h_{k}.\]
Now we calculate the discrete harmonic measure of the bases of a truncated cylinder. Let $g_N^{\h}$ be the $\h$-discrete harmonic function on  $D^{\delta}_{N}(\Omega)=\overline{\Omega}^{\delta}\times([-N,N]\cap\left(\h\Z\right))$ defined by its boundary values
\begin{equation*}
\left\{ \begin{array}{lll}
g_N^{\h}(x^{\prime},\pm N)&=1 \quad & x^{\prime}\in \Omega^{\h}\\
 g_N^{\h}(x^{\prime},x_{n+1})&=0\quad &x^{\prime}\in \partial \Omega^{\h},-N\le x_{n+1}\le N.
 \end{array}\right. 
\end{equation*}

\begin{lemma}\label{l:1}
The harmonic measure $g_N^\h(x)=g_N^\h(x',x_{n+1})$ is given by
\begin{equation*}
g_N^{\h}(x^{\prime},x_{n+1})=\sum^{K^{\h}}_{k=1}d^{\h}_{k}f^{\h}_{k}(x^{\prime})\frac{\cosh(a^{\h}_{k}x_{n+1})}{\cosh a^{\h}_{k}N},
\end{equation*}
where
$d_k^\h=\sum_{x'\in\Omega^\h}f_k^\h(x')$.
\end{lemma}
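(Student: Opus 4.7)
The strategy is to verify directly that the right-hand side has the correct boundary values and is discrete harmonic on $D^\h_N(\Omega)$, then invoke uniqueness of the Dirichlet problem on this finite domain. Uniqueness follows from the maximum principle for $\h$-discrete harmonic functions on a finite domain, which was cited earlier.

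First, set $G(x',x_{n+1}) = \sum_{k=1}^{K^\h} d_k^\h f_k^\h(x') \frac{\cosh(a_k^\h x_{n+1})}{\cosh(a_k^\h N)}$. Each term is (up to the constant factor $d_k^\h/\cosh(a_k^\h N)$) the basis element $u_k^\h$ listed above the lemma, and these are $\h$-discrete harmonic on $D^\h(\Omega)$: indeed, splitting $\Delta_{\h,n+1}$ into $x'$ and $x_{n+1}$ parts, $\Delta_{\h,n} f_k^\h = -\lambda_k^\h f_k^\h$ while the $x_{n+1}$-part of the Laplacian applied to $\cosh(a_k^\h x_{n+1})$ yields $2\h^{-2}(\cosh \h a_k^\h -1)\cosh(a_k^\h x_{n+1}) = \lambda_k^\h \cosh(a_k^\h x_{n+1})$ by the defining equation for $a_k^\h$, so the two contributions cancel. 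Hence $G$ is $\h$-discrete harmonic in $D^\h_N(\Omega)$.

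Next check boundary values. On the lateral face $\partial\Omega^\h \times ([-N,N]\cap(\h\Z))$ every eigenfunction vanishes: $f_k^\h(x')=0$, so $G\equiv 0$ there. On the top and bottom caps $x_{n+1}=\pm N$ the hyperbolic cosine ratio equals $1$, so
\[
G(x',\pm N) = \sum_{k=1}^{K^\h} d_k^\h f_k^\h(x').
\]
The eigenfunctions $\{f_k^\h\}$ of the symmetric operator $-\Delta_{\h,n}$ on $\Omega^\h$ with Dirichlet boundary condition form an orthonormal basis of the $K^\h$-dimensional space of functions vanishing on $\partial\Omega^\h$, with respect to the standard counting inner product $\langle f,g\rangle = \sum_{x'\in\Omega^\h} f(x')g(x')$. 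The constant function $\mathbf{1}$ (extended by $0$ on $\partial\Omega^\h$) lies in this space, and its Fourier coefficients are exactly $\langle \mathbf{1},f_k^\h\rangle = \sum_{x'\in\Omega^\h} f_k^\h(x') = d_k^\h$. Therefore $\sum_k d_k^\h f_k^\h(x') = 1$ for every $x'\in\Omega^\h$, and $G(x',\pm N)=1$ as required.

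Thus $G$ is an $\h$-discrete harmonic function on $D^\h_N(\Omega)$ with the same boundary values as $g_N^\h$. Since $D^\h_N(\Omega)$ is a finite discrete domain, the maximum principle applied to $G - g_N^\h$ and $g_N^\h - G$ gives $G = g_N^\h$ throughout $D^\h_N(\Omega)$, proving the formula. The only substantive point is the orthonormal expansion of the constant $\mathbf{1}$ in the eigenbasis; everything else reduces to the bookkeeping above.
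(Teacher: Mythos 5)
Your proof is correct and rests on the same key computation as the paper's: the orthonormal expansion of the constant function $\mathbf{1}$ in the eigenbasis $\{f_k^\h\}$, which yields the coefficients $d_k^\h$. The paper runs the argument in the opposite direction (expanding $g_N^\h$ in the basis of $\mathcal{H}_0$ and solving for the coefficients from the cap condition, rather than verifying a candidate and invoking uniqueness), but this is an inessential difference.
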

\begin{proof}
Clearly $g_N^\h$ is an even function with respect to $x_{n+1}$ and therefore it can be written as
\begin{equation}\label{eq:sw2}
g_N^{\h}(x^{\prime},x_{n+1})=\sum^{K^{\h}}_{k=1}C_{k}f^{\h}_{k}(x^{\prime})\cosh(a^{\h}_{k}x_{n+1}),
\end{equation}
where the coefficients $C_{k}$ satisfy the linear system of equations
\begin{equation*}%\label{eq:sw3}
1=\sum^{K^{\h}}_{k=1}C_{k}f^{\h}_{k}(x^{\prime})\cosh(a^{\h}_{k}N),
\end{equation*}
for each $x^{\prime}\in \Omega^{\h}$. 
Since functions $\{f_k^\h\}_{k=1}^{K^\h}$ form an orthonormal basis,
we obtain
\begin{equation}\label{eq:sw6}
C_{k}\cosh a_k^\h N=\sum_{x^{\prime}}f^{\h}_{k}(x^{\prime})=d^{\h}_{k}.
\end{equation}
Substituting (\ref{eq:sw6}) in (\ref{eq:sw2}) we get the required formula.
\end{proof}

We conclude this subsection by one auxiliary inequality. We note that the values of the function $g^{\h}_{N}(x^{\prime},x_{n+1})$ on the middle hyperplane $\{x_{n+1}=0\}$ are given by
\[
g^{\h}_{N}(x^{\prime},0)=\sum^{K^{\h}}_{k=1}d^{\h}_{k}f^{\h}_{k}(x^{\prime})\frac{1}{\cosh a^{\h}_{k}N}.
\]
Then a linear combination of the values of $u$ on $\Omega^\h\times\{0\}$  admits the following estimate
\begin{multline}\label{eq:sumg}
\sum_{x^{\prime}}w(x')g^{\h}_{N}(x^{\prime},0)=\sum_{x^{\prime}}\sum^{K^{\h}}_{k=1}d^{\h}_{k}w(x')f^{\h}_{k}(x^{\prime})\frac{1}{\cosh a^{\h}_{k}N}\le\\
\sum^{K^{\h}}_{k=1}\frac{|d^{\h}_{k}|}{\cosh a^{\delta}_{k}N}\left(\sum_{x'}|w(x')|^2\right)^{1/2},
\end{multline}
we applied the Cauchy-Schwarz inequality and used that eigenfunctions $f_k^\h$ are normalized by $\sum_{x'}|f_k^\h(x')|^2=1$. 

\noindent{\bf 4.2 Phragm\'{e}n-Lindel\"{o}f theorem, improved version}

Now we  prove a version of the Phragm\'{e}n-Lindel\"{o}f  theorem for $\h$-discrete subharmonic functions in truncated cylinder $D^{\delta}_{N}(\Omega).$ We want to show that if a subharmonic function is positive inside the cylinder, say at some points on the section $\Omega^\h\times\{0\}$, then it grows at least exponentially.  Moreover, we can give estimates on the truncated cylinders and not only asymptotic result as in Theorem \ref{th:th1}. We use the following notation $u^+=\max\{0,u\}$.

\begin{theorem}\label{th:PLP}
Suppose $u$ is a $\h$-discrete subharmonic function on $D^{\delta}_{N}(\Omega)$ such that $u(x',x_{n+1})=0$ when $x'\in\partial\Omega^\h$ and $u$ satisfies the following positivity condition on $\Omega\times\{0\}$
\[
\sum_{x^{\prime} \in\Omega^{\h}}u^+(x',0)^2=A^2 K^\h>0. 
\]
Then
\begin{equation}\label{eq:PL1}
\max_{\Omega^{\h}\times\left[-N,N\right]}u(x^{\prime},x_{n+1})\geq \frac{A}2\left(\sum_{k}\exp(-a^{\h}_{k}N)\right)^{-1},
\end{equation}
where  $a^{\h}_{k}= \h^{-1}\cosh^{-1}(1+\frac{1}{2}\h^2\lambda^{\h}_{k}).$ In particular, there exists a constant $C_{\Omega}$ that depends only on $\Omega$ such that
\begin{equation}\label{eq:PL2}
\max_{\Omega^{\h}\times\left[-N,N\right]}u(x^{\prime},x_{n+1})\geq C_{\Omega}A\exp(a^{\h}_{1}N),\end{equation}
for any $N\in\N$ and any $\h<\h_0$.
\end{theorem}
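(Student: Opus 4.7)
The plan is to compare $u$ to a constant multiple of the discrete harmonic measure $g_N^\delta$ from Lemma~\ref{l:1}, and then couple the resulting pointwise bound with the spectral estimate (\ref{eq:sumg}). Set $M=\max_{D_N^\delta(\Omega)}u$; the positivity hypothesis forces $M>0$. The function $v=u-Mg_N^\delta$ is $\delta$-discrete subharmonic in $D_N^\delta(\Omega)$; it vanishes on the lateral boundary (both $u$ and $g_N^\delta$ do) and satisfies $v=u-M\le 0$ on the two caps $\Omega^\delta\times\{\pm N\}$ (where $g_N^\delta\equiv 1$). The discrete maximum principle therefore gives $u(x',0)\le Mg_N^\delta(x',0)$ on $\Omega^\delta$, and since $M>0$ and $g_N^\delta\ge 0$ this in turn yields $u^+(x',0)\le Mg_N^\delta(x',0)$.

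Multiplying this by $u^+(x',0)\ge 0$ and summing in $x'\in\Omega^\delta$ gives
\[
A^2K^\delta=\sum_{x'}u^+(x',0)^2\le M\sum_{x'}u^+(x',0)\,g_N^\delta(x',0).
\]
The sum on the right is exactly of the shape treated in (\ref{eq:sumg}), applied with $w=u^+(\cdot,0)$, so that
\[
\sum_{x'}u^+(x',0)\,g_N^\delta(x',0)\le A\sqrt{K^\delta}\sum_{k=1}^{K^\delta}\frac{|d_k^\delta|}{\cosh(a_k^\delta N)}.
\]
Since the eigenfunctions $f_k^\delta$ are $\ell^2$-normalized, Cauchy--Schwarz gives $|d_k^\delta|=|\sum_{x'}f_k^\delta(x')|\le\sqrt{K^\delta}$; together with the elementary inequality $\cosh t\ge e^{t}/2$ this reduces the right-hand side to $2AK^\delta\sum_k e^{-a_k^\delta N}$. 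Substituting and cancelling one factor of $AK^\delta$ produces (\ref{eq:PL1}).

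To pass from (\ref{eq:PL1}) to (\ref{eq:PL2}), I would factor out the leading term,
\[
\sum_k e^{-a_k^\delta N}=e^{-a_1^\delta N}\sum_k e^{-(a_k^\delta-a_1^\delta)N}\le e^{-a_1^\delta N}\sum_k e^{-(a_k^\delta-a_1^\delta)},
\]
which is valid for $N\ge 1$ because every $a_k^\delta\ge a_1^\delta$. Since $a_1^\delta\to\sqrt{\lambda_1(\Omega)}$ as $\delta\to 0$, the prefactor $e^{a_1^\delta}$ is uniformly bounded for $\delta<\delta_0$, so everything hinges on a $\delta$-uniform bound
\[
\sum_{k=1}^{K^\delta}e^{-a_k^\delta}\le C(\Omega).
\]
This is the main obstacle. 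I would handle it by enclosing $\Omega$ in a cube $Q_R$: the monotonicity (\ref{eq:incl1}) and the Weyl-type estimate (\ref{eq:Wa}) then give $N_\Omega^\delta(\lambda)\le C_n(R)(\lambda^{n/2}+1)$ uniformly in $\delta$, and inverting this counting bound yields $\lambda_k^\delta(\Omega)\gtrsim k^{2/n}$ once $k$ exceeds a dimensional constant. I would split the sum at the threshold $\delta^2\lambda_k^\delta=1$: in the regime $\delta^2\lambda_k^\delta\le 1$ a Taylor expansion of $\cosh^{-1}$ gives $a_k^\delta\ge c\sqrt{\lambda_k^\delta}\gtrsim k^{1/n}$, so the partial series is bounded by the convergent $\sum_k e^{-ck^{1/n}}$; in the regime $\delta^2\lambda_k^\delta>1$ we have $a_k^\delta\ge c/\delta$, and the total contribution is at most $K^\delta e^{-c/\delta}\lesssim\delta^{-n}e^{-c/\delta}$, which is uniformly small for $\delta<\delta_0$. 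Combining the two estimates produces the required constant $C(\Omega)$ and yields (\ref{eq:PL2}).
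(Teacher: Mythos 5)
Your proof is correct and follows essentially the same route as the paper: comparison of $u$ with a multiple of the harmonic measure $g_N^\delta$ via the maximum principle, the spectral estimate (\ref{eq:sumg}) applied with $w=u^+(\cdot,0)$ together with $|d_k^\delta|\le\sqrt{K^\delta}$ and $\cosh t\ge e^t/2$ for (\ref{eq:PL1}), and for (\ref{eq:PL2}) the same splitting of the spectrum at $\lambda\sim\delta^{-2}$ combined with the cube comparison (\ref{eq:incl1}) and the Weyl-type bound (\ref{eq:Wa}). The only cosmetic difference is in the low-frequency tail: you first reduce to $N=1$ and invert the counting function to get $\lambda_k^\delta\ge c\,k^{2/n}$, whereas the paper keeps the $N$-dependence and groups the eigenvalues into shells $J_l$ with $|J_l|\le C_\Omega(l+1)^n$; the two bookkeepings are equivalent.
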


The inequality (\ref{eq:PL1}) is more precise than (\ref{eq:PL2}). We write the constant explicitly and, as soon as $\lambda_k^\h$ are known, the right hand side of (\ref{eq:PL1}) can be estimated. Clearly, the right hand side of (\ref{eq:PL1}) is of order $\exp(a_1^\h N)$ when $N\rightarrow\infty$. This is expressed accurately in inequality (\ref{eq:PL2}). The constant $C_\Omega$ is not explicit, but it depends neither on $N$ nor  on $\h$, so we can also fix $N$ and let $\h$ go to zero to get estimates of continuous functions that can be approximated by discrete subharmonic ones.

\begin{proof}
 Let  $M_N=\max_{\left|x_{n+1}\right|=N}u(x^{\prime},x_{n+1})$. 
Then by the maximum principle, 
\[u(x^{\prime},x_{n+1})\leq M_{N}  g^{\h}_{N}(x^{\prime},x_{n+1})\quad{\text{on}}\quad \Omega^{\h}\times\left[-N,N\right],\] where $g^{\h}_{N}$ is the harmonic measure from Lemma \ref{l:1}, clearly $g^\h_{N}\geq 0$.
 Taking the  linear combination over $x^{\prime}\in\Omega^\h$ with non-negative coefficients $w(x')=u^+(x',0)$ and using (\ref{eq:sumg}), we obtain
\[\sum_{x'}u^+(x',0)^2=\sum_{x^{\prime}}u^+(x',0)u^+(x^{\prime},0)\leq M_{N}\sum^{K^{\h}}_{k=1}\frac{|d^{\h}_{k}|}{\cosh a^{\h}_{k}N}\left(\sum_{x'}|u^+(x',0)|^2\right)^{1/2}.\]
Then we have
\begin{equation*}%\label{eq:sv2}
M_N\geq (\sum_{x'}u^+(x',0)^2)^{1/2}\left(\sum^{K^{\h}}_{k=1}\frac{|d^{\h}_{k}|}{\cosh a^{\h}_{k}N}\right)^{-1}=A(K^{\h})^{1/2}\left(\sum^{K^{\h}}_{k=1}\frac{|d^{\h}_{k}|}{\cosh a^{\h}_{k}N}\right)^{-1}.
\end{equation*}
Applying the Cauchy-Schwarz inequality, we get
\[|d^{\h}_{k}|=\left|\sum_{x^{\prime}}f^{\h}_{k}(x^{\prime})\right|\leq \left(\sum_{x^{\prime}}(f^{\h}_{k}(x^{\prime}))^{2}\right)^{\frac{1}{2}}\left(\sum_{x^{\prime}}1\right)^{\frac{1}{2}}\leq (K^{\h})^{\frac{1}{2}}. \]
Now, we combine the last two inequalities and obtain
\[M_{N}\geq A\left(\sum^{K^{\h}}_{k=1}\frac{1}{\cosh a^{\h}_{k}N}\right)^{-1}.\] 
Then (\ref{eq:PL1}) follows from the following inequality
\[
\sum_{k=1}^{K^\h}\frac{1}{\cosh a^{\h}_{k}N}\leq 2\sum_{k=1}^{K^\h}\exp(-a_k^{\h}N).
\] 

To prove (\ref{eq:PL2}) we may assume that $\h$ is small (otherwise we have an upper bound for $K^\h$). We  partition the eigenvalues $\lambda_{k}^\h$ into two groups. We choose a positive number $c$ and define 
$I_1=\{k:\lambda_k^\h<c \h^{-2}\}$ and $I_2=\{k:\lambda_k^\h\ge c\h^{-2}\}$. Let also $c_0=\cosh^{-1}(1+c)$, then
\[
\sum_{k\in I_2}\exp(-a_k^\h N)\le \sum_{k\in I_2}\exp(-\h^{-1}c_0N)\le
K^\h\exp(-\h^{-1}c_0N)\le C_0\exp(-a_1^\h N),
\]
when $\h$ is small enough, since $K^\h\le C\h^{-n}$ and $a_1^\h\rightarrow\left(\lambda_1(\Omega)\right)^{1/2}$ as $\h\rightarrow 0$.

For the second part of the sum we have $\h\sqrt{\lambda_k^\h}<c$. We consider the function $\alpha:\R_+\rightarrow\R_+$ defined by
\[
\cosh\alpha(s)=1+\frac{1}{2}s^2.\]
Then $a_k^\h=\h^{-1}\alpha(\h\sqrt{\lambda_k^\h})$ and a simple calculation gives
\[
\alpha'(s)=\frac{2}{\sqrt{4+s^2}}.\]  
Denoting the minimum of the derivative of $\alpha$  on $[0,c]$ by $d$, we obtain
\[
a_k^\h\ge a_1^\h+d\left((\lambda_k^\h)^{1/2}-(\lambda_1^\h)^{1/2}\right).
\]
 
Now we partition $I_1$ further into $J_l=\{k: l\le(\lambda_k^\h)^{1/2}-(\lambda_1^\h)^{1/2}< l+1\}$, $l=0,1,...$ and let $|J_l|$ denote the cardinality of $J_l$. We consider any cube $Q$ such that $\Omega\subset Q$  and apply inequalities 
(\ref{eq:incl1}) and (\ref{eq:Wa}) to obtain 
\[|J_l|\le N^{\delta}_{\Omega}\left(\left((\lambda^{\delta}_{1})^{\frac{1}{2}}+l+1\right)^{2}\right)
\leq N^{\delta}_{Q}\left(\left((\lambda^{\delta}_{1})^{\frac{1}{2}}+l+1\right)^{2}\right)\le C_\Omega(l+1)^n,\]
for each $l=0,1,... .$
Finally, we get
\begin{multline*}
\sum_{k\in I_1}\exp(-a_k^\h)\le\sum_{l=0}^\infty\sum_{k\in J_l}\exp(-a_k^\h N)\le
\sum_{l=0}^\infty\exp(-(a_1^\h+ld)N)|J_l|\le\\
C_{\Omega}\exp(-a_1^\h N)\sum_{l=0}^\infty(l+1)^n\exp(-ldN).
 \end{multline*}
The last sum is finite and can be bounded by a constant independent of $N\in\N$ and $\h$. This concludes the proof of the theorem.
 \end{proof}
One of the differences between the continuous and discrete cases lies in the formulas connecting eigenvalues $\lambda$ and corresponding numbers $a$. For the continuous case one has $a(\lambda)=\sqrt{\lambda}$ while for the discrete case the formula becomes
\[a^\delta(\lambda)=\delta^{-1}\cosh^{-1}(1+\frac{1}{2}\delta^2\lambda).\]
This function resembles $\sqrt{\lambda}$ on the interval $[0,c\delta^{-2}]$ but grows as $\log\lambda$ when $\lambda\rightarrow\infty$. To deal with the discrete case we have partitioned the set of eigenvalues into two parts. 

\noindent{\bf 4.3 Stability estimates for solution of the Dirichlet problem}

A standard argument shows that estimates of the harmonic measure imply conditional stability estimates for harmonic function. We apply it for truncated cylinders and  prove the following 
\begin{theorem}\label{th:last}
Let $h$ be a $\delta$-discrete harmonic function on $D^{\delta}_{N}(\Omega)$ with boundary values $f$ on $\partial\Omega^\h\times[-N,N]$ and such that $|h(x',\pm N)|\le M_N.$ Then
\begin{equation}\label{eq:last}
\max_{x'}|h(x',0)|\le \max|f|+C_\Omega (M_N+\max|f|)\exp(-a_1^\delta N).
\end{equation}
In particular, if  $h$ is harmonic in $D^{\delta}(\Omega)$, \[|h(x',x_{n+1})|=o(\exp(a_1^\delta|x_{n+1}|)) \quad{when} \ |x_{n+1}|\rightarrow\infty \] and $h$ is bounded on the boundary $\partial\Omega\times(\delta\Z)$ then $h$ is bounded by the same constant in $D^{\delta}(\Omega).$
\end{theorem}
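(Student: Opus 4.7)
The plan is to use the discrete maximum principle with a majorant built from the harmonic measure $g_N^\delta$ of Lemma \ref{l:1}, establish the exponential decay of $g_N^\delta(\,\cdot\,,0)$ via its eigenfunction expansion, and pass to the limit $N\to\infty$ on translated truncated cylinders for the unbounded-cylinder statement.

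First I would introduce the comparison function $V(x',x_{n+1}) = \max|f| + (M_N + \max|f|)\,g_N^\delta(x',x_{n+1})$, which is $\delta$-discrete harmonic in $D_N^\delta(\Omega)$. On the lateral part of the boundary one has $g_N^\delta=0$, so $V=\max|f|\ge|f|=|h|$; on the cap interior $\Omega^\delta\times\{\pm N\}$ one has $g_N^\delta=1$ and $V\ge M_N+\max|f|\ge|h|$, and at the cap corners (which belong to the lateral boundary and where $g_N^\delta=0$) $V=\max|f|\ge|h|=|f|$. Applying the discrete maximum principle to the harmonic functions $V\pm h$, which are non-negative on $\partial D_N^\delta(\Omega)$, yields $|h|\le V$ throughout $D_N^\delta(\Omega)$; evaluating at $x_{n+1}=0$ gives
\[
\max_{x'}|h(x',0)| \le \max|f| + (M_N + \max|f|)\max_{x'} g_N^\delta(x',0).
\]

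The central step is then to show $\max_{x'} g_N^\delta(x',0) \le C_\Omega\exp(-a_1^\delta N)$. Starting from the explicit expansion in Lemma \ref{l:1}, the pointwise estimates $|d_k^\delta|\le(K^\delta)^{1/2}$ (Cauchy--Schwarz) and $|f_k^\delta(x')|\le 1$ (orthonormality) reduce the problem to bounding $\sum_{k=1}^{K^\delta}1/\cosh(a_k^\delta N)$. This is precisely what is controlled inside the proof of Theorem \ref{th:PLP} by partitioning the spectrum at $c\delta^{-2}$ and using the Weyl-type counting bound (\ref{eq:Wa})--(\ref{eq:incl1}) for the small eigenvalues and the trivial estimate $K^\delta\le C\delta^{-n}$ for the large ones, yielding $\sum_{k=1}^{K^\delta}1/\cosh(a_k^\delta N)\le C_\Omega\exp(-a_1^\delta N)$. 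Combined with the previous display this gives (\ref{eq:last}). The main obstacle is exactly here: the Cauchy--Schwarz step introduces a factor $(K^\delta)^{1/2}\sim\delta^{-n/2}$ that has to be absorbed into $C_\Omega$, making the constant depend on $\delta$ as well as $\Omega$, though still independent of $N$, which is all that the rest of the argument requires.

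For the ``in particular'' statement, set $F=\sup_{\partial\Omega\times(\delta\Z)}|h|<\infty$, fix $(x_0',y_0)\in D^\delta(\Omega)$, and apply (\ref{eq:last}) on the translated truncated cylinder $\overline{\Omega}^\delta\times\bigl([y_0-N,y_0+N]\cap(\delta\Z)\bigr)$. The lateral data is bounded by $F$, while the growth hypothesis $|h(x',x_{n+1})|=o(\exp(a_1^\delta|x_{n+1}|))$ yields $M_N\cdot\exp(-a_1^\delta N)\to 0$ as $N\to\infty$. Hence $|h(x_0',y_0)|\le F+C_\Omega(M_N+F)\exp(-a_1^\delta N)\to F$, and since $(x_0',y_0)$ was arbitrary, $|h|\le F$ throughout $D^\delta(\Omega)$.
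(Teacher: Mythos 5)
Your proof is correct and follows essentially the same route as the paper's: the paper writes $h=v+u$ where $v$ carries the lateral data (so $|v|\le\max|f|$) and compares $u$ to $(M_N+\max|f|)g_N^{\delta}$, which is just a repackaging of your single barrier $V$, and both arguments then rest on the pointwise bound $\max_{x'}g_N^{\delta}(x',0)\le C_\Omega\exp(-a_1^{\delta}N)$ extracted from the spectral partition in the proof of Theorem \ref{th:PLP}, followed by the same translate-and-let-$N\to\infty$ step for the unbounded cylinder. The $(K^{\delta})^{1/2}$ factor you flag in the Cauchy--Schwarz step is equally present, and equally unaddressed, in the paper's own appeal to ``the proof of Theorem \ref{th:PLP}'', so your version is no weaker than the original on the question of whether $C_\Omega$ is truly independent of $\delta$.
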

\begin{proof}
Let $v_N$ be the $\delta$-discrete  harmonic function in the truncated cylinder $D^{\delta}_{N}(\Omega)=(\Omega\times(-N,N))^\h$  that solves the following Dirichlet problem
\[
\Delta_{n+1,\delta}v=0,\quad v(x',\pm N)=0,\ x'\in\Omega^\h,\quad{\text{and}}\quad v(x',x_{n+1})=f(x',x_{n+1}),\ x'\in\partial\Omega^\h.\]
By the maximum principle for the bounded domain $D^{\delta}_{N}(\Omega),$  $|v(x)|\le\max|f|$.  
Then $u=h-v$  is $\delta$-discrete harmonic function on $D^{\delta}_{N}(\Omega)$ that vanishes on the part $\partial\Omega^\h\times[-N,N]$ of the boundary and satisfies 
\[
\max_{\Omega^\delta\times[-N,N]}| u(x',x_{n+1})|\le \max|f|+M_N.\]
We compare it to a multiple of the harmonic measure $g_N^\delta$ and  use the estimate 
\[|g^{\delta}_N(x',0)|\le C_\Omega\exp(-a_1^\delta N)\]
that follows from the proof of  Theorem \ref{th:PLP}. Then we obtain
\[
|u(x',0)|\le C_\Omega (M_N+\max|f|)\exp(-a_1^\delta N). \]
This implies (\ref{eq:last}). The second statement of the theorem follows from (\ref{eq:last}).
\end{proof}
%%%%%%%%%%%%%%%%%%%%%%%%%%%%%%%%%%%%%%%%%%%%%%%%%%%%%%%%%%%%%%%%%%%%%%%%%%%%%%
%%%%%%%%%%%%%%%%%%%%%%%%%%%%%%%%%%%%%%%%%%%%%%%%%%%%%%%%%%%%%%%%%%%%%%%%%%%%%%

{\bf Acknowledgments}
The work was supported by the
Research Council of Norway grant 185359/V30.

 The author is grateful to both referees for their valuable remarks and suggestions and would like to thank his PhD supervisor Eugenia Malinnikova for her constructive comments and encouragement during the preparation of the paper.

  \vspace{2cc}

\vspace{1cc}

 Department of Mathematics,\newline
  Norwegian University of Science and Technology,\newline
  7491, Trondheim, Norway,\newline
 email: guadie@math.ntnu.no
 {\small
\noindent}
\end{document}